\documentclass{amsart}
\usepackage{amssymb, amsfonts, amsmath, amsthm}

\DeclareMathSymbol{\twoheadrightarrow}  {\mathrel}{AMSa}{"10}

\DeclareMathSymbol{\twoheadrightarrow} {\mathrel}{AMSa}{"10}

\def\C{{\mathbb C}}

\def\A{{\mathbf A}}

              \def\Mult{\mathrm{Mult}}

        \def\K_a{\bar{K}}

\def\M{\mathrm{M}}
                           \def\N{\mathrm{N}}

            \def\grad{\mathrm{grad}}

\def\K{{\mathcal{K}}}

\def\Oc{{\mathcal O}}



\newtheorem{thm}{Theorem}[section]

\newtheorem{lem}[thm]{Lemma}
\newtheorem{cor}[thm]{Corollary}

\theoremstyle{definition}

\newtheorem{ex}[thm]{Example}
\newtheorem{rem}[thm]{Remark}

        \newtheorem{sect}[thm]{}

\title[One-dimensional polynomial maps]{One-dimensional polynomial maps, periodic points and multipliers}

\author{Yuri G. Zarhin (Zarkhin)}
\thanks{This work was partially supported by a grant from the Simons Foundation (\#246625 to Yuri Zarkhin).}

\address{Department of Mathematics, Pennsylvania
State University, University Park, PA 16802, USA}

\address{Institute of Mathematical Problems of Biology,
Russian Academy of Sciences, Pushchino, Moscow Region, Russia}


\email{zarhin\char`\@math.psu.edu}

\begin{document}
\begin{abstract}
We discuss tangent maps related to the multipliers of periodic
points of a typical one-dimensional polynomial map.

UDC 517.535.2, 517.927.7
\end{abstract}

 \maketitle

\section{Definitions, Notation, Statements}
We write $\C$ for the field of complex numbers. For every positive
integer $m$ let us  consider the affine space $\A^m=\C^m$ of all
monic complex polynomials of degree  $m$
$$u(x)=x^m+\sum_{i=0}^{m-1} a_i x^i$$ with coefficients
 $a=(a_0, \dots , a_{m-1})\in \C^m=\A^m$.
It is convenient to identify the tangent
  space
   $\C^m$  to $u(x)\in \A^m$ with the space of all polynomials
  $p(x)$ of degree $\le m-1$. Namely, to a polynomial $p(x)=\sum_{i=0}^{m-1} c_i x^i$
 one assigns the tangent vector $(c_0, \dots, c_{m-1}) \in \C^m$
 that corresponds to
``the tangency class  at $u(x)$ of the curve"
 $\epsilon \to u(x)+\epsilon \cdot p(x) \in \A^m$ \cite[Part II, Ch. III, Sect. 8, pp. 81--82]{Serre}.

Let $P_m\subset\A^m$ be the everywhere dense Zariski-open affine
subset
  that consists of all  polynomials  without multiple roots. Let $f(x)=x^m+\sum_{i=0}^{m-1} a_i x^i\in P_m$
and let us choose a root $\alpha$   of $f(x)$. Locally (with respect
to $a$), one may view
  $\alpha$  (using Implicit Function Theorem) as a holomorphic
(univalued) function in  $a=(a_0, \dots , a_{m-1})$.  We have
(\cite[Sect. 2]{ZarhinMatZametki2012})
$$d\alpha/da_i=-  [f^{\prime}(\alpha)]^{-1} \alpha^i.$$
(Since $\alpha$ is a simple root of $f(x)$, we have $f^{\prime}(\alpha)\ne 0$.)
We also have (ibid)
 $$d f^{\prime}(\alpha)/d a_i =i \alpha^{i-1} - [f^{\prime}(\alpha)]^{-1} \alpha^i  f^{\prime\prime}(\alpha)$$
 (of course, if $i=0$ then the first term disappears). Using these formulas, let us compute the differential
 $d\N: \C^m \to \C$ (at $f(x)$) of   locally defined holomorphic function
$$\N: P_{m} \to  \C, \ f(x)  \mapsto f^{\prime}(\alpha).$$

 It follows that $d\N$ sends the tangent vector $p(x)=\sum_{i=0}^{m-1} c_i x^i$ to the number
$$d\N(p(x))=\sum_{i=0}^{m-1} c_i \frac{d f^{\prime}}{d a_i}(\alpha)=p^{\prime}(\alpha)- [f^{\prime}(\alpha)]^{-1}p(\alpha)  f^{\prime\prime}(\alpha).$$

\begin{ex}
\label{exm}
Suppose that $m \ge 3$ and $f(x)=x^m-x$. Then $\alpha$ is either zero or $(m-1)$th  root of unity.
If $\alpha=0$ then $f^{\prime\prime}(0)=0$ and
$$d\N(p(x))=p^{\prime}(0)=c_1.$$
The {\sl gradient} of $\N$ at $f(x)=x^m-x$ (with respect to the root
$0$) is
$$Q_1(0)=(0,1, \dots ,0) \in \C^m.$$
If $\alpha^{m-1}=1$ then
$$f^{\prime}(\alpha)=m\alpha^{m-1}-1=m-1,$$
$$f^{\prime\prime}(\alpha)=m(m-1)\alpha^{m-2}=m(m-1)/\alpha,$$
and
$$d\N(p(x))=p^{\prime}(\alpha)-\frac{m p(\alpha)}{\alpha}.$$
The {\sl gradient}  of $\N$ at $f(x)=x^m-x$ (with respect to the
root $\alpha$) is
$$Q_1(\alpha)=\left(-\frac{m}{\alpha}, (1-m), (2-m)\alpha, \dots ,-\alpha^{m-2}\right) \in \C^n.$$
\end{ex}

Let $n \ge 2$ be an integer and $g(x) \in \C[x]$ a degree $n$ monic
polynomial with complex coefficients. For every positive integer $r$
we denote by $g^{\circ r}(x)$ the composition $g(\dots g(x))$ ($r$
times). Clearly, $g^{\circ r}(x)$ is a degree $n^r$ monic polynomial
with complex coefficients.
Let us consider the polynomial map
$$G: \C \to\C, \ z \mapsto g(z).$$
Clearly, the fixed points of $G$ are exactly the roots of $g(x)-x$
while the roots of  $g^{\circ r}(x)-x$ are exactly the points of
period (dividing) $r$.

\begin{ex}
If $g(x)=x^n$ then $g^{\circ r}(x)=x^{n^r}, \ g^{\circ r}(x)-x=
x^{n^r}-x$.
\end{ex}

 We write $Z_{n,r}\subset
\A^n$ for the everywhere dense Zariski-open  affine subset that
consists of all monic degree $n$ polynomials $g(x)$ such that
$g^{\circ r}(x)-x$ lies in $P_{n^r}$ (i.e., does not have multiple
roots). For example, $x^n \in Z_{n,r}$ for all $r$. Clearly, for
every positive integer $m$
$$Z_{m,1}= \{f(x)+x\mid f(x)\in P_m \}.$$
It is also clear that the holomorphic map
$$U_m: Z_{m,1} \to P_{m}, \ h(x)\mapsto h(x)-x$$
is a holomorphic isomorphism, whose tangent map
$$dU_n:\C^m \to \C^m$$
is the identity map at all points of $Z_{m,1}$.

 Let us consider a locally defined holomorphic function
$$\M^{r}: Z_{n,r}\to Z_{n^r,1} \to P_{n^r} \to  \C, \ g(x) \mapsto g^{\circ r}(x)
\overset{U_{n^r}}{\longmapsto}
  g^{\circ r}(x)-x \overset{\N}{\mapsto} [g^{\circ
r}(x)-x]^{\prime}(\alpha)$$ where $\alpha$ is a root of $g^{\circ
r}(x)-x$. We are going to discuss its differential $$d\M^{r}_{\mid
g(x)}:\C^n \to \C,$$ paying special attention to the computation of
the corresponding gradient
$$\grad(\M^{r})_{\mid g(x)}\in \C^n$$
 at the point $g(x)=x^n\in Z_{n,r}$. In what follows we denote this gradient by $Q_r(\alpha)$.
This notation is compatible with our previous notation for $Q_1(\alpha)$ in Example \ref{exm}.

\begin{rem}
Let us consider the locally defined multiplier function
$$\Mult^{r}: Z_{n,r}\to Z_{n^r,1} \to  \C, \ g(x) \longmapsto [g^{\circ r}]^{\prime}(\alpha).$$
Clearly, $\M^{r}(g)=\Mult^{r}(g)- 1$. It follows that the differentials
 $d\M^{r}$ and $d\Mult^{r}$ everywhere coincide. In other words
$$\grad(\M^{r} )_{\mid g(x)}=\grad(\Mult^{r})_{\mid g(x)} \ \forall g(x) \in Z_{n,r}.$$
\end{rem}

\begin{sect}
In order to state our main results, first notice that if $g(x) \in
\in Z_{n,r}$ and $\nu(n,r)$ is the number of of all orbits of length
$r$ for the map $z \mapsto g(z)$ then
$$\nu(n,r) \ge \frac{n}{r}$$
(see Subsection \ref{numberoforbits}). Second, let us consider a
positive integer $\ell$ and a sequence $\{r_1, \dots, r_{\ell}\}$ of
$\ell$ positive integers. Let $Z(n,\ell; r_1, \dots , r_{\ell})$ be
the intersection of all $Z_{n,r_i}$; it is a nonempty Zariski-open
affine subset in $\A^n$ that contains $g(x)=x^n$. Let $g(x)\in
Z(n,\ell; r_1, \dots , r_{\ell})$.
 For each $i$ pick a complex number $\beta_i$ that is a periodic point of $G: z\mapsto g(z)$
of {\bf exact period} $r_i$. Locally (with respect to  $g$), each
$\beta_i$ is a holomorphic function (in the coefficients of $g(x)$.)

Suppose that $\beta_1,  \dots \beta_{\ell}$ belong to {\bf distinct orbits} of $z\mapsto g(z)$. Let us consider the following $\ell$
locally defined holomorphic functions
$$\Mult_{\beta_i,r_i}: Z(n,\ell; r_1, \dots , r_{\ell})\to \C, \ g(x) \mapsto  [g^{\circ r_i}]^{\prime}(\beta_i).$$
Let  $Z^0(n,\ell; r_1, \dots , r_{\ell})$ be the set of all polynomials $g(x) \in Z(n,\ell; r_1, \dots , r_{\ell})$
such that the $\ell$-element set
$$\{\grad(\Mult_{\beta_i,r_i})_{\mid g(x)}\in \C^n \ \mid 1 \le i\le \ell\}$$
of gradients of $\Mult_{\beta_i,r_i}$'s at $g(x)$ is linearly
independent in $\C^n$  for every choice of $\{\beta_1, \dots,
\beta_{\ell}\}$. Clearly, $Z^0(n,\ell; r_1, \dots , r_{\ell})$  is
an open subset of $Z(n,\ell; r_1, \dots , r_{\ell})$ and therefore
of $\A^n$ in complex topology. However, this set may be empty; e.g.,
when $\ell\ge n$.
\end{sect}

The following statements are  main results of this paper.

\begin{thm}
 \label{open}
The set  $Z^0(n,\ell; r_1, \dots , r_{\ell})$  is  a Zariski-open subset of $Z(n,\ell; r_1, \dots , r_{\ell})$ and
therefore of $\A^n$.
\end{thm}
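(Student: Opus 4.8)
The plan is to exhibit the complement of $Z^0:=Z^0(n,\ell;r_1,\dots,r_\ell)$ in $Z:=Z(n,\ell;r_1,\dots,r_\ell)$ as the image, under a finite (hence closed) morphism, of a Zariski-closed set, and then read off Zariski-closedness of that complement. For each $i$ let $D_{r_i}(x)$ be the $r_i$-th dynatomic polynomial of $g$, namely the monic polynomial
$$D_{r_i}(x)=\prod_{d\mid r_i}\left(g^{\circ d}(x)-x\right)^{\mu(r_i/d)},$$
whose coefficients are polynomials in the coefficients $a=(a_0,\dots,a_{n-1})$ of $g$ and whose roots are exactly the points of exact period $r_i$; on $Z\subseteq Z_{n,r_i}$ it is separable, being a factor of the separable polynomial $g^{\circ r_i}(x)-x$. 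I would then form the incidence variety
$$V=\{(g,\beta_1,\dots,\beta_\ell)\in Z\times\C^\ell : D_{r_i}(\beta_i)=0\ \text{for}\ 1\le i\le\ell\}.$$
Since each $D_{r_i}(\beta_i)$ is monic in $\beta_i$ with coefficients in $\Oc(Z)$, the first projection $\pi:V\to Z$ is a finite morphism, hence a closed map.

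Next I would translate the two defining conditions of $Z^0$ into algebraic conditions on $V$. On $V$ the number $[g^{\circ r_i}(x)-x]'(\beta_i)$ is nonzero by separability, so the implicit-function computations recalled at the start of the paper, combined through the chain rule, express each gradient $\grad(\Mult_{\beta_i,r_i})_{\mid g}=Q_{r_i}(\beta_i)\in\C^n$ as a vector of rational functions in $(a,\beta_i)$ whose denominators are powers of $[g^{\circ r_i}(x)-x]'(\beta_i)$; hence these gradients assemble into a morphism from $V$ to the space of $\ell\times n$ matrices. Let $V_{\mathrm{dep}}\subseteq V$ be the locus on which all $\ell\times\ell$ minors of this matrix vanish; it is Zariski-closed and equals the locus where the $\ell$ gradients are linearly dependent. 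Let $V_{\mathrm{dist}}\subseteq V$ be the locus on which $\beta_1,\dots,\beta_\ell$ lie in pairwise distinct orbits; for the finitely many pairs $i<j$ with $r_i=r_j=:r$ this is cut out by $\Delta\ne0$, where $\Delta=\prod_{i<j,\,r_i=r_j}\prod_{k=0}^{r-1}\left(\beta_i-g^{\circ k}(\beta_j)\right)$, so $V_{\mathrm{dist}}$ is Zariski-open. By construction $\pi(V_{\mathrm{dep}}\cap V_{\mathrm{dist}})$ is exactly $Z\setminus Z^0$.

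The hard part is that $V_{\mathrm{dep}}\cap V_{\mathrm{dist}}$ is only locally closed, so a priori its image under the closed map $\pi$ need not be closed. I expect to overcome this by showing that $V_{\mathrm{dist}}$ is in fact closed, hence clopen, in $V$. The crucial geometric input is that distinct orbits cannot merge inside $Z$: the roots of $g^{\circ r}(x)-x$ stay pairwise distinct throughout $Z\subseteq Z_{n,r}$, so along any convergent sequence $(g_t,\beta^{(t)})\to(g_0,\beta^{(0)})$ in $V$ the points $\beta_i^{(t)}$ and $g_t^{\circ k}(\beta_j^{(t)})$ are continuously-varying distinct roots and therefore cannot coincide in the limit; thus $\Delta(g_0,\beta^{(0)})\ne0$ whenever $\Delta(g_t,\beta^{(t)})\ne0$, which makes $V_{\mathrm{dist}}$ closed in the Euclidean topology. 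A subset of the complex variety $V$ that is simultaneously Zariski-open and Euclidean-closed is a union of irreducible components of $V$---each irreducible component is connected in the Euclidean topology, hence lies entirely inside or entirely outside the subset---and is therefore Zariski-closed.

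Combining these observations, $V_{\mathrm{dep}}\cap V_{\mathrm{dist}}$ is the intersection of two Zariski-closed subsets of $V$ and so is Zariski-closed; applying the closed map $\pi$ shows that $Z\setminus Z^0=\pi(V_{\mathrm{dep}}\cap V_{\mathrm{dist}})$ is Zariski-closed in $Z$. Hence $Z^0$ is Zariski-open in $Z$, and since $Z$ is itself Zariski-open in $\A^n$, the set $Z^0$ is Zariski-open in $\A^n$, as required.
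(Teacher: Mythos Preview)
Your argument is correct, and it reaches the conclusion by a genuinely different construction than the paper's. The paper builds, for each period $r$, a cover $\hat{Z}_{n,r}\to Z_{n,r}$ whose points over $g$ record one representative from \emph{every} orbit of each length $m\mid r$ (enforced by the factorization $g^{\circ r}(x)-x=\prod_{m\mid r}\prod_{s\in S_m}\prod_{i=0}^{m-1}(x-g^{\circ i}(\phi_m(s)))$); since this cover already indexes orbits rather than individual periodic points, the ``distinct orbits'' requirement is automatic, and the complement of $Z^0$ is obtained as the image, under the finite projection, of the union over all $\ell$-element subsets $D\subset S_r$ (and, in the mixed-period case, over all tuples $\{D_r\}_{r\in R}$ after passing to a fiber product of such covers) of the Zariski-closed dependency loci. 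Your cover $V$ is leaner---it carries only the $\ell$ periodic points one actually cares about---but the price is that the distinct-orbits condition must be shown to be Zariski-closed on $V$, which you do with a Euclidean clopen argument exploiting that simple roots of $g^{\circ r}(x)-x$ cannot collide while $g$ stays in $Z_{n,r}$. The paper's route is purely algebro-geometric and carries over unchanged to any algebraically closed ground field; your clopen step, as written, invokes the classical topology of $\C$, though it could be recast algebraically (each locus $\{\beta_i=g^{\circ k}(\beta_j)\}$ is the image of a closed immersion from a smaller incidence variety that is itself \'etale over $Z$, and a closed immersion between schemes \'etale over a common base is open, hence clopen). In return, you avoid the bookkeeping of the full orbit-representative cover and the fiber products over the set of distinct periods.
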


\begin{thm}
 \label{mainT}
Suppose that $n \ge 3$. Assume that $\sum_{i=1}^{\ell}r_i \le n$. If
$r_j=1$ for some $j$ then we assume additionally that
$\sum_{i=1}^{\ell}r_i < n$.

Then    $Z^0(n,\ell; r_1, \dots , r_{\ell})$ contains $g(x)=x^n$  and therefore is nonempty.
\end{thm}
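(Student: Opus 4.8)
The plan is to verify the membership $x^n\in Z^0(n,\ell;r_1,\dots,r_\ell)$ directly: I must prove that for every admissible choice of periodic points $\beta_1,\dots,\beta_\ell$ (of exact periods $r_1,\dots,r_\ell$, in distinct orbits of $z\mapsto z^n$) the gradients $\grad(\Mult_{\beta_i,r_i})_{\mid x^n}=Q_{r_i}(\beta_i)\in\C^n$ are linearly independent. Recall that the periodic points of $z\mapsto z^n$ are $0$ together with the roots of unity $\alpha$ satisfying $\alpha^{n^r-1}=1$, whose orbit is $\{\alpha^{n^k}\}_k$. First I would compute $Q_r(\alpha)$ at $g=x^n$ for $\alpha\ne 0$. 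Using the Remark that $d\M^r=d\Mult^r$, the chain rule for $\tfrac{d}{d\epsilon}\big|_0\,(x^n+\epsilon p)^{\circ r}$, and the displayed formula for $d\N$ applied to $F_0=x^{n^r}-x$, I expect to reach, for $p(x)=x^i$ and $\alpha^{n^r-1}=1$, the clean expression $Q_r(\alpha)_i=-(n-i)\,n^{r-1}\sum_{k=0}^{r-1}(\alpha^{n^k})^{i-n}$ for $i=0,\dots,n-1$; the case $r=1$ recovers $Q_1(\alpha)$ from Example~\ref{exm}, a useful check. For $\alpha=0$ (which forces $r=1$) I would simply quote $Q_1(0)=(0,1,0,\dots,0)$ from Example~\ref{exm}.

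Next I would strip off the inessential factors. Writing $D=\mathrm{diag}(n-0,n-1,\dots,n-(n-1))$, which is invertible since $i<n$, and setting $\gamma_k=\alpha^{n^k}$, the formula becomes $D^{-1}Q_r(\alpha)=-n^{r-1}\sum_{k=0}^{r-1}\gamma_k^{-n}\,V(\gamma_k)$, where $V(\gamma)=(1,\gamma,\dots,\gamma^{n-1})$ is the Vandermonde vector of the node $\gamma$. Thus, after the invertible change $D^{-1}$ and up to nonzero scalars, each $Q_r(\alpha)$ with $\alpha\ne0$ is a combination, with nonzero coefficients $\gamma_k^{-n}$, of the Vandermonde vectors of the distinct nonzero points of the orbit of $\alpha$, while $Q_1(0)$ is a nonzero multiple of $(0,1,0,\dots,0)$. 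Since $D^{-1}$ is invertible, independence of the $Q_{r_i}(\beta_i)$ is equivalent to independence of these transformed vectors.

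The last step is the independence itself. All orbits here are disjoint and consist of distinct points, so the nonzero orbit points that occur form a set of $N'$ distinct nonzero numbers, where $N'=\sum r_i$ if no $\beta_i$ vanishes and $N'=\sum r_i-1$ if exactly one $\beta_i=0$ (at most one can, since $\{0\}$ is a single orbit). If no $\beta_i$ vanishes, then $N'=\sum r_i\le n$ Vandermonde vectors $V(\gamma)$ with distinct nodes are independent in $\C^n$; as the orbits use disjoint node sets and the coefficients $\gamma_k^{-n}$ are nonzero, the transformed $Q$'s are independent too. If one $\beta_i=0$, I must show that the $N'$ Vandermonde vectors together with $(0,1,0,\dots,0)$ are independent. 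Here I would dualize: a dependence relation is equivalent to the functional $S\mapsto\sum_j b_j S(\gamma_j)+c\,S'(0)$ vanishing on all polynomials $S$ of degree $<n$; testing against $S(x)=x^2\prod_{j\ne j_0}(x-\gamma_j)$, which has a double root at $0$, kills $c$ and every $b_j$ with $j\ne j_0$, forcing $b_{j_0}=0$, and then $c=0$.

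I expect the main obstacle to be twofold: carrying out the gradient computation to the formula above (the only genuinely technical part is the bookkeeping with the exponents $n^r-n^k(n-i)$ and the repeated use of $\alpha^{n^r-1}=1$), and correctly pinning down the role of the hypothesis $\sum r_i<n$. The latter is exactly the degree constraint $N'+1\le n-1$ needed so that the test polynomial $S(x)=x^2\prod_{j\ne j_0}(x-\gamma_j)$ has degree $<n$; without it the vector $(0,1,0,\dots,0)$ can genuinely lie in the span of the Vandermonde vectors (for instance when the nonzero nodes contain a pair $\gamma,-\gamma$), so the strict inequality is not a convenience but is forced by the presence of the fixed point $0$.
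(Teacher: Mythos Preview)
Your proposal is correct and follows essentially the same route as the paper: compute $Q_r(\alpha)$ explicitly at $g=x^n$, strip the factor $(n-k)$, and reduce to a Vandermonde-type independence over the union of the (disjoint) orbits. The paper packages the linear-algebra step as a separate lemma on power sums, $\sum_{\beta\in S} c(\beta)\beta^{-u}=0$ for $u=1,\dots,d$ forces $c\equiv 0$ (proved via a partial-fractions/Taylor argument, with an explicit remark that Vandermonde nondegeneracy does the job), whereas you argue directly with the Vandermonde vectors $V(\gamma)$ and, for the fixed point $0$, with the duality against a test polynomial $S(x)=x^2\prod_{j\ne j_0}(x-\gamma_j)$; the paper instead simply discards the coordinate $k=1$ and applies its lemma with $d\le n-2$. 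These are two phrasings of the same idea, and both land on exactly the same numerical constraints $\sum r_i\le n$ (all $\beta_i\ne 0$) and $\sum r_i<n$ (one $\beta_j=0$).
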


Notice that under the notation and assumptions of Theorem
\ref{mainT}, if $r$ is a positive integer and $l(r)$ is the number
of $i$'s with $r_i=r$ then
$$l(r) \le \frac{n}{r} \le \nu(n,r).$$

Combining Theorems \ref{open} and \ref{mainT}, we obtain the
following statement.

\begin{cor}
\label{maincor} Suppose that $n \ge 3$. Assume that
$\sum_{i=1}^{\ell}r_i \le n$. If $r_j=1$ for some $j$ then we assume
additionally that $\sum_{i=1}^{\ell}r_i < n$.

Then $Z^0(n,\ell; r_1, \dots , r_{\ell})$ is a Zariski-open
everywhere dense  subset of $Z(n,\ell; r_1, \dots , r_{\ell})$ that
contains $g(x)=x^n$.
\end{cor}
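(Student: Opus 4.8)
The plan is to deduce Corollary~\ref{maincor} directly from Theorems~\ref{open} and~\ref{mainT}, so the only real work is to package the two inputs together and to verify the density assertion, which the two theorems do not quite state verbatim. First I would invoke Theorem~\ref{open} to conclude that $Z^0(n,\ell; r_1, \dots, r_\ell)$ is a Zariski-open subset of $Z(n,\ell; r_1, \dots, r_\ell)$; this holds unconditionally, so the hypotheses $n \ge 3$ and $\sum_{i=1}^{\ell} r_i \le n$ (with the strict inequality when some $r_j = 1$) are not needed at this step. Next I would invoke Theorem~\ref{mainT}: under precisely those hypotheses, $g(x) = x^n$ lies in $Z^0(n,\ell; r_1, \dots, r_\ell)$, so in particular the set is nonempty. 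This immediately gives the membership claim $x^n \in Z^0(n,\ell; r_1, \dots, r_\ell)$ asserted in the corollary.

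The remaining point is density. Here I would use the general fact that a nonempty Zariski-open subset of an irreducible variety is automatically Zariski-dense in it. The ambient set $Z(n,\ell; r_1, \dots, r_\ell)$ is a nonempty Zariski-open subset of the affine space $\A^n$ (it is a finite intersection of the sets $Z_{n,r_i}$, each of which is Zariski-open and dense by the definitions in the excerpt, and it contains $x^n$); hence it is itself irreducible, being a nonempty open subset of the irreducible variety $\A^n$. Since Theorem~\ref{mainT} furnishes a point of $Z^0$, the latter is a nonempty Zariski-open subset of the irreducible set $Z(n,\ell; r_1, \dots, r_\ell)$, and therefore Zariski-dense in it. Because Zariski density implies density in the complex (analytic) topology, $Z^0$ is everywhere dense in $Z(n,\ell; r_1, \dots, r_\ell)$, which in turn is everywhere dense in $\A^n$; composing, $Z^0$ is everywhere dense in $\A^n$ as well.

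I expect no serious obstacle: the corollary is a formal combination of the two theorems together with the standard irreducibility-implies-density principle. The one place that requires a moment's care is confirming irreducibility of the ambient open set $Z(n,\ell; r_1, \dots, r_\ell)$, so that ``nonempty Zariski-open'' upgrades to ``Zariski-dense''; this follows because $\A^n = \C^n$ is irreducible and any nonempty Zariski-open subset of an irreducible space is both irreducible and dense. Once that is in hand, the statement follows by citing Theorem~\ref{open} for openness, Theorem~\ref{mainT} for nonemptiness and the membership of $x^n$, and the density principle for the ``everywhere dense'' conclusion.
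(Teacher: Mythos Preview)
Your proposal is correct and follows exactly the paper's approach: the paper simply states that the corollary is obtained by ``combining Theorems~\ref{open} and~\ref{mainT}'' with no further argument. Your added remarks about irreducibility of $\A^n$ and the density of nonempty Zariski-open subsets just make explicit what the paper leaves implicit.
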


\begin{ex}
 Suppose that $\ell = n-1$ and all $r_i=1$ (i.e., all the $\beta_i$ involved are  fixed points).
 It follows from results of \cite{ZarhinMatZametki2012,Rees}
that $$Z^0(n,n-1;1, \dots , 1)=Z(n,n-1;1, \dots ,1)=Z_{n,1}.$$
\end{ex}

\begin{rem}
\label{maprank}
 In the notation and assumptions of Corollary \ref{maincor}, let us consider the locally defined holomorphic map
$$Z(n,\ell; r_1, \dots , r_{\ell})\to \C^{\ell}$$
defined by the collection of functions
$\{\Mult_{\beta_i,r_i}\}_{i=1}^{\ell}$. Corollary \ref{maincor}
 asserts that this map has (maximal) rank $\ell$ on a nonempty Zariski-open
 subset
  $$Z^0(n,\ell; r_1, \dots , r_{\ell}) \subset Z(n,\ell; r_1, \dots , r_{\ell})$$
for every choice of periodic points $\{\beta_1, \dots ,
\beta_{\ell}\}$. It would be interesting to study its image. For the
case of fixed points (i.e., when all $r_i=1$), see \cite{Kulikov}.
\end{rem}

Notice that Remark \ref{maprank} gives a partial answer to a
question of Yu.S. Ilyashenko, who was interested in the case of two
orbits, in connection with \cite{Ilyash,Igor}.

\begin{rem}
In the case of two orbits,
 it turns out (see Examples \ref{n3r2}, \ref{rplus1} and Remark \ref{rplus2} below) that $g(x)=x^n$ does not belong
to $Z(n,2; r_1,r_2)$ if either $r_1=r_2=n-1$ or $r_1=1, r_2=n-1$. It would be interesting to find out whether in
these cases $Z(n,2; r_1,r_2)$ is empty.
\end{rem}

The paper is organized as follows.
In Section \ref{compute} we compute explicitly the differentials
$d\Mult_{\beta,r}$ at $g(x)=x^n$
where $\beta$ is a $(n^r-1)$th root of unity.
 This allows us to write down explicitly
the corresponding gradients $Q_r(\beta)\in\C^n$.
Now Theorem \ref{mainT} becomes equivalent to an assertion that
the corresponding set of vectors $\{Q_{r_i}(\beta_i)\}$ (and $Q_1(0)$ if one of $r_i$ is $1$)
 is linearly independent in $\C^n$.
We prove this assertion in Section \ref{ind}. Using standard
properties of finite maps (\cite[Ch. 1]{Sh},  \cite[Sect.
8]{Milne}), we prove Theorem \ref{open}  in Section \ref{global}.

\section{Computations of tangent maps}
\label{compute}

\begin{lem}
\label{tangentiter} Let us consider the holomorphic map $\Phi_{n,r}:
\A^n \to \A^{n^r}$ that sends a degree $n$ monic polynomial $g(x)$ to
the monic degree $n^r$ polynomial $g^{\circ r}(x)$. Then the tangent
map $d\Phi_{n,r}$ at $g(x)=x^n$ is as follows. It sends a tangent
vector $x^k$ (at the point $x^n$) to the tangent vector
$$p_{r,k}(x):=\sum_{i=1}^r n^{r-i} x^{n^r-n^i+n^{i-1}k}$$
(at the point $x^{n^r}$).
 In particular,
$$p_{r,0}(x):=\sum_{i=1}^r n^{r-i} x^{n^r-n^i}=n^{r-1}x^{n^r-n}+n^{r-2}x^{n^r-n^2}+\dots ,$$
$$p_{r,1}(x):=\sum_{i=1}^r n^{r-i} x^{n^r-n^i+n^{i-1}}=n^{r-1}x^{n^r-n+1}+n^{r-2}x^{n^r-n^2+n}+\dots $$
and $$\deg(p_{r,0})=n^r-n, \ \deg(p_{r,1})=n^r-n+1, \
\deg(p_{r,k})=n^r-n+k.$$
\end{lem}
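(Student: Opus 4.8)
The plan is to realize the basis tangent vector $x^k$ at the point $x^n \in \A^n$ by the explicit one-parameter family $g_\epsilon(x) = x^n + \epsilon\, x^k$, whose tangency class at $\epsilon = 0$ is precisely $x^k$ under the identification of the tangent space with polynomials of degree $\le n-1$ recalled in Section 1. Since $\Phi_{n,r}$ sends $g_\epsilon$ to $g_\epsilon^{\circ r}$, the vector $d\Phi_{n,r}(x^k)$ is, by definition, the tangency class at $x^{n^r}$ of the curve $\epsilon \mapsto g_\epsilon^{\circ r}(x)$; that is, the polynomial $D_r(x) := \frac{\partial}{\partial\epsilon}\big|_{\epsilon=0} g_\epsilon^{\circ r}(x)$. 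So the whole lemma reduces to computing this first-order variation and reading off its degree.

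To compute $D_r$ in a self-contained way, I would set $y_i = g_\epsilon^{\circ i}(x)$ and $D_i = \frac{\partial y_i}{\partial\epsilon}\big|_{\epsilon=0}$, with $D_0 = 0$. Differentiating the defining relation $y_i = y_{i-1}^n + \epsilon\, y_{i-1}^k$ in $\epsilon$ and then setting $\epsilon = 0$, where $y_{i-1}|_{\epsilon=0} = x^{n^{i-1}}$, kills the $\epsilon$-term and yields the recursion
$$D_i = n\, x^{(n-1)n^{i-1}} D_{i-1} + x^{n^{i-1}k} = n\, x^{n^i - n^{i-1}} D_{i-1} + x^{n^{i-1}k}.$$
A straightforward induction on $r$ then gives $D_r = p_{r,k}$: the prefactor $n\, x^{n^r - n^{r-1}}$ carries the $i$-th term $n^{(r-1)-i} x^{n^{r-1} - n^i + n^{i-1}k}$ of $p_{r-1,k}$ to the $i$-th term $n^{r-i} x^{n^r - n^i + n^{i-1}k}$ of $p_{r,k}$, while the inhomogeneous term $x^{n^{r-1}k}$ supplies the missing $i=r$ summand. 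Equivalently, and more conceptually, one may apply the standard chain-rule formula for the variation of an iterate,
$$\frac{\partial}{\partial\epsilon}\Big|_{\epsilon=0} g_\epsilon^{\circ r}(x) = \sum_{i=1}^{r} \big(g^{\circ(r-i)}\big)'\!\big(g^{\circ i}(x)\big)\cdot \big(g^{\circ(i-1)}(x)\big)^k,$$
and substitute $g = x^n$, using $g^{\circ s}(x) = x^{n^s}$ and $\big(g^{\circ s}\big)'(x) = n^s x^{n^s - 1}$; the $i$-th summand becomes $n^{r-i} x^{n^r - n^i}\cdot x^{n^{i-1}k}$, recovering $p_{r,k}$ at once.

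Finally, for the degree assertions I would rewrite the $i$-th exponent as $n^r - n^{i-1}(n-k)$. Because $0 \le k \le n-1$ forces $n - k \ge 1$, this exponent is strictly decreasing in $i$; hence the summands have pairwise distinct degrees, no cancellation of the top term can occur, and $\deg(p_{r,k}) = n^r - n^{0}(n-k) = n^r - n + k$, attained at $i=1$ with nonzero coefficient $n^{r-1}$. The cases $k = 0, 1$ are the quoted special values, and since $n^r - n + k \le n^r - 1$ the output is indeed a legitimate tangent vector at $x^{n^r}$. There is no genuine obstacle here: the argument is a short computation, and the only points demanding care are getting the chain-rule variation formula right --- in particular evaluating each derivative factor $\big(g^{\circ(r-i)}\big)'$ at the correct inner iterate $g^{\circ i}(x)$ --- and the exponent bookkeeping needed to confirm both the closed form and the absence of cancellation.
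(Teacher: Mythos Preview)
Your proof is correct and follows essentially the same approach as the paper: both realize the tangent vector $x^k$ via the curve $g_\epsilon(x)=x^n+\epsilon x^k$ and obtain the same recursion $D_{r}=n\,x^{n^{r}-n^{r-1}}D_{r-1}+x^{n^{r-1}k}$, which the paper writes as $p_{r+1,k}(x)=n\,x^{(n-1)n^{r}}p_{r,k}(x)+x^{kn^{r}}$, and then induct on $r$. Your chain-rule derivation and the monotonicity argument for the exponents $n^{r}-n^{i-1}(n-k)$ are pleasant additions that the paper omits, but the core strategy is identical.
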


\begin{proof}
Notice that $p_{1,k}(x)=x^k$ and for all positive integers $r$
$$p_{r+1,k}(x)=n x^{(n-1)n^r}p_{r,k}(x)+x^{k
n^r}.$$

{\sl Induction by} $r$.
 We need to prove that if $g^{[\epsilon]}(x)=x^n+\epsilon x^k$
then $[g^{[\epsilon]}]^{\circ r}(x)=x^{n^k}+\epsilon
p_{r,k}(x)+O(\epsilon^2)$. If $r=1$ then it is obvious. Assume that
this assertion is true for $r$ and let us check it for $r+1$. We
have
$$[g^{[\epsilon]}]^{\circ (r+1)}(x)=$$
$$\{[g^{[\epsilon]}]^{\circ r}(x)\}^n+\epsilon
\{[g^{[\epsilon]}]^{\circ r}(x)\}^k= (x^{n^r}+\epsilon
p_{r,k}(x)+O(\epsilon^2))^n+\epsilon (x^{n^r}+\epsilon
p_{r,k}(x)+O(\epsilon^2))^k=$$
$$x^{n^{r+1}}+\epsilon n
x^{(n-1)n^r}p_{r,k}(x)+\epsilon x^{k n^r}+O(\epsilon^2)=$$
$$x^{n^{r+1}}+\epsilon\{n x^{(n-1)n^r}p_{r,k}(x)+x^{k
n^r}\}+O(\epsilon^2)= x^{n^{r+1}}+\epsilon
p_{r,k+1}(x)+O(\epsilon^2).$$

\end{proof}

 Since all $p_{r,k}$ (for given $n$ and $r$) have distinct degrees, the
set $\{p_{r,0}, \dots , p_{r,n-1}\}$ is linearly independent. This
means that the rank of the tangent map to $\Phi_{n,r}$ at $g(x)=x^n$
is $n$, i.e. the tangent map at this point is injective and its
image coincides with
$$\oplus_{k=0}^{n-1}\C\cdot p_{r,k}.$$

\begin{sect}
\label{tangentMult} Suppose that $n^r \ge 3$.
Let us compute the differential
 $$d\Mult^{r}=d\M^{r}=d(\M \Phi_{n,r})=d\M \circ d \Phi_{n,r}$$
  at $g(x)=x^n \in Z_{n,r}$. Clearly,
  $$\Phi_{n,r}(x^n)=x^{n^r}\in P_m$$
  with $m=n^r$. Let $\alpha$ be a nonzero root of $x^m-x$, i.e., $\alpha^{n^r-1}=1$. Using Lemma \ref{tangentiter} and Example \ref{exm}, we obtain the following.
  The image
  $$q_{r,k}(\alpha):=d\Mult^{r}_{\mid g(x)=x^n}(x^k)$$
   of tangent vector $x^k$ to $g(x)=x^n \in Z_{n,r}$ is
  $$p_{r,k}^{\prime}(\alpha)-\frac{n^r p_{r,k}(\alpha)}{\alpha}=$$
 $$\alpha^{-1}\sum_{i=1}^r  (n^r-n^i+n^{i-1} k) n^{r-i}
\alpha^{n^r-n^i+n^{i-1}k}-\alpha^{-1}\sum_{i=1}^r n^r  n^{r-i}
\alpha^{n^r-n^i+n^{i-1}k}=$$
 $$\alpha^{n^r-1}\sum_{i=1}^r [n^{2r-i}-n^r+k
n^{r-1}-n^{2r-i}]\alpha^{-n^i+n^{i-1}k}=-(n^r-k n^{r-1})\sum_{i=1}^r
\alpha^{-n^i+n^{i-1}k}=$$
$$-(n-k)n^{r-1}\sum_{i=1}^r
\alpha^{-n^i+n^{i-1}k}=
-(n-k)n^{r-1}\sum_{i=1}^r \left(\frac{1}{\alpha^{n^{i-1}}}\right)^{n-k}.$$
In other words,
$$q_{r,k}(\alpha)=d\Mult^{r}_{\mid g(x)=x^n}(x^k)=-(n-k)n^{r-1}\sum_{i=1}^r \left(1/\alpha^{n^{i-1}}\right)^{n-k}.$$
Notice that
$$q_{r,k}(\alpha)=q_{r,k}(\alpha^n), \ q_{r,0}(\alpha)=n\cdot q_{r,n-1}(\alpha).$$
It follows that the {\sl gradient} of $\Mult^{r}$ at $g(x)=x^n$
(with respect  to $\alpha$) is
$$Q_{r}(\alpha)=(q_{r,0}(\alpha), q_{r,1}(\alpha), \dots, q_{r,n-1}(\alpha))=(n q_{r,n-1}(\alpha), q_{r,1}(\alpha), \dots, q_{r,n-1}(\alpha))\in \C^n.$$
Clearly,
$$Q_{r}(\alpha)=Q_{r}(\alpha^n)= \dots = Q_{r}(\alpha^{n^{r-1}}).$$

Let $\Oc(\alpha)=\{\alpha, \alpha^n, \dots , \alpha^{n^{r-1}}\}$ be the orbit of $\alpha$ with respect to $z\mapsto z^n$ and let $d(\alpha)$ be the cardinality of the set $\Oc(\alpha)$. Clearly, $d(\alpha)$ is a positive integer that divides $r$ and
$$\beta^{d(\alpha)}=\beta \ \forall \beta \in \Oc(\alpha).$$
It is also clear that
$$q_{r,k}(\alpha)=-\frac{r}{d(\alpha)}(n-k)n^{r-1}\sum_{\beta \in \Oc(\alpha)} \left(1/\beta\right)^{n-k}.$$
This implies that
$$Q_r(\alpha) =\frac{rn^{r}}{d(\alpha)n^{d(\alpha)}}\cdot Q_{d(\alpha)}(\alpha).$$
 \end{sect}

\begin{ex}
 \label{n3r2}
Suppose that $n=3$ and $r=2$. Then $n^r-1=8$. Let $\alpha$ be a $8$th root of unity that is not $\pm 1$.
Then $\alpha$ is a periodic point of exact period $2$ for $z\mapsto z^3$. We have
$$Q_2(\alpha)=
 -3^1\cdot \left(3\left[\frac{1}{\alpha}+\frac{1}{\alpha^3}\right],2\left[\frac{1}{\alpha^2}+\frac{1}{\alpha^6}\right],
      \left[\frac{1}{\alpha}+\frac{1}{\alpha^3}\right]\right).$$
So, if $\alpha$ is a primitive fourth root of unity, i.e.,
$$\alpha^2=-1, \ \alpha=\pm \mathbf{i},$$
then
$$\frac{1}{\alpha}+\frac{1}{\alpha^3}=0, \ \frac{1}{\alpha^2}+\frac{1}{\alpha^6}=-2$$ and
       $$Q_2(\alpha)=-3\cdot (0, -2,0)=(0,6,0).$$
 (Notice that $\mathbf{i}$ and $-\mathbf{i}$ lie in the same orbit.)

If $\alpha$ is a primitive $8$th root of unity then
$$\alpha^4=-1, \ 1+\frac{1}{\alpha^4}=0$$ and therefore
$$Q_2(\alpha)=
   -3\cdot \left(3\left[\frac{1}{\alpha}+\frac{1}{\alpha^3}\right],0,
      \left[\frac{1}{\alpha}+\frac{1}{\alpha^3}\right]\right)=
       -\frac{3}{\alpha^3}\cdot \left(3[\alpha^2+1],0, \alpha^2+1)\right)=
-\frac{3(\alpha^2+1)}{\alpha^3}\cdot \big(3,0,1\big).$$
 Now if we put $\beta=\alpha^{-1}$ then  $\alpha$ and $\beta$ are primitive $8$th roots of unity that do not belong
to the same orbit while $Q_2(\alpha)$ and $Q_2(\beta)$ generate the same line $\C\cdot (3,0,1)$ in $\C^3$. This implies
that $Z(3,2;2,2)$ does {\sl not} contain $g(x)=x^3$.

\end{ex}

\begin{ex}
\label{rplus1} Suppose that $n=r+1$ and $r\ge 2$. Then for all
positive integers $i$
$$n^i=(1+r)^i=1+ i \cdot r^{1}+ \dots + {i\choose j}r^{j} + \dots +r^i.$$
It follows that $n^i$ is congruent to $1+ir$ modulo $r^2$. In
particular, $n^r-1$ is divisible by $r^2=(n-1)^2$. It also follows
that   $(n^i-n)/(n-1)=n^{i-1}$ is congruent to $i-1$ modulo $r$ and
therefore
$$n^i-n \equiv (i-1)r \bmod r^2.$$

Suppose that $\alpha$ is a {\sl primitive} $r^2$th root of unity. Then $\alpha^{n^r-1}=1$ and therefore
$$\alpha^{n^r}=\alpha,$$
i.e., $\alpha$ is a periodic point for the map $z\mapsto z^n$. Clearly, its period divides $r$. On the other hand,
 for all positive integers $i<r$ the power $n^i$ is {\sl not} congruent to $1$ modulo $r^2$ and
therefore $\alpha^{n^i}\ne \alpha$. It follows that $\alpha$ has
exact period $r$.

The number $\gamma:=\alpha^{1-n}=\alpha^{-r}$ is a primitive $r$th
root of unity. For each integer $k$ with $0\le k\le n-1$ the number
$\delta=\gamma^{n-k}$ is an $r$th root of unity. Clearly, $\delta
\ne 1$ if and only if $n-k \ne n-1$, i.e. $k \ne 1$.  In particular,
if $k \ne 1$ then $\sum_{i=1}^r \delta^i=0$.

We have
$$q_{r,k}(\alpha)=-(n-k)n^{r-1} \sum_{i=1}^r
\left(\alpha^{-n^i}\right)^{n-k}=$$
$$-(n-k)n^{r-1} \cdot\alpha^{n(k-n)}\sum_{i=1}^r
\left(\alpha^{n-n^i}\right)^{n-k}=-(n-k)n^{r-1}\alpha^{n(k-n)}\sum_{i=1}^r
\delta^{i-1}=$$
$$-(n-k)n^{r-1}\delta^{-1}\alpha^{n(k-n)}\sum_{i=1}^r \delta^{i}=
0$$ if $k \ne 1$. On the other hand, if $k=1$ then $\delta=1$ and
$q_{r,1}(\alpha)=-r(n-1)n^{r-1}\gamma$. It follows that
$$Q_r(\alpha)=-(0,r(n-1)n^{r-1}\gamma, 0, \dots ,0)=-r(n-1)n^{r-1}\gamma \cdot Q_1(0)\in \C^n.$$
This implies that $Z^0(r+1,2;r,1)$ does {\sl not} contain
$g(x)=x^n$.
\end{ex}

\begin{rem}
\label{rplus2}
Suppose that $r >2$ and $n=r+1$.  Example \ref{rplus1} tells us
 that if $\alpha$ and $\beta$ are
 primitive $r^2$th roots of unity then $Q_r(\alpha)$ and $Q_r(\beta)$
generate the same line $\C \cdot (0,1, \dots , 0)$ in $\C^n$.
 Since $r>2$, the number $\varphi(r^2)$ of primitive $r^2$th
roots of unity is strictly greater than $r$. (Here $\varphi$ is the
Euler function.) In particular, we may choose such $\alpha$ and
$\beta$ from different orbits (of length $r$) of the map $z \mapsto
z^n$. It follows that $Z^0(r+1,2;r,r)$ does {\sl not} contain
$g(x)=x^n$.
\end{rem}

\section{Linear independence}
\label{ind}
As was already pointed out,
Theorem \ref{mainT} is an immediate corollary of the following statement.

\begin{thm}
\label{bign}
 Let   $\ell$ be a positive integer. Let $\{r_1, \dots r_{\ell}\}$ be a sequence of $\ell$ positive integers.
  Let $\{\alpha_1, \dots, \alpha_{\ell}\}$ be a sequence of distinct complex numbers such that
  $$\alpha_i^{n^{r_i}-1}=1 \ \forall i=1, \dots, \ell.$$
  Assume that $\{\alpha_1, \dots, \alpha_{\ell}\}$
 belong to different orbits of the map $z \mapsto z^n$. Then:

  \begin{itemize}
  \item[(i)]
  the set of $\ell$ vectors
 $\{Q_{r_1}(\alpha_1), \dots , Q_{r_{\ell}}(\alpha_{\ell})\}$ in $\C^n$ is linearly independent if
 $n \ge \sum_{i=1}^{\ell}  d(\alpha_i)$. In particular, if $\sum_{i=1}^{\ell} r_i \le n$ then the $\ell$-tuple
 $\{Q_{r_1}(\alpha_1), \dots , Q_{r_{\ell}}(\alpha_{\ell})\}$  is linearly independent in $\C^n$.
 \item[(ii)]
 If $n \ge 2+ \sum_{i=1}^{\ell}  d(\alpha_i)$ then the $(\ell+1)$-tuple
 $\{Q_1(0); Q_{r_1}(\alpha_1), \dots , Q_{r_{\ell}}(\alpha_{\ell})\}$  is a linearly independent set in $\C^n$.
In particular, if $\sum_{i=1}^{\ell} r_i < n-1$ then the
$(\ell+1)$-tuple $\{Q_1(0); Q_{r_1}(\alpha_1), \dots ,
Q_{r_{\ell}}(\alpha_{\ell})\}$ is a linearly independent set in
$\C^n$.
 \end{itemize}
 \end{thm}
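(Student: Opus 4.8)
The plan is to reduce both parts to the invertibility of a Vandermonde matrix. From the formula for $q_{r,k}(\alpha)$ recorded in Subsection \ref{tangentMult},
$$q_{r,k}(\alpha)=-\frac{r}{d(\alpha)}\,(n-k)\,n^{r-1}\sum_{\beta\in\Oc(\alpha)}\beta^{-(n-k)},$$
I note that $Q_{r_i}(\alpha_i)=c_i\,V(\alpha_i)$, where $c_i=-\frac{r_i}{d(\alpha_i)}n^{r_i-1}\ne 0$ and $V(\alpha)\in\C^n$ is the vector whose coordinate $k$ (for $0\le k\le n-1$) equals $(n-k)\sum_{\beta\in\Oc(\alpha)}\beta^{-(n-k)}$. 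As each $c_i\ne 0$, linear independence of $\{Q_{r_i}(\alpha_i)\}$ is equivalent to that of $\{V(\alpha_i)\}$.

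For (i), assume $\sum_i\mu_i V(\alpha_i)=0$. Reading off coordinate $k$ and putting $j=n-k$ (so $j$ runs through $1,\dots,n$, and the factor $n-k=j$ never vanishes) turns this into $\sum_{i}\mu_i\sum_{\beta\in\Oc(\alpha_i)}\beta^{-j}=0$ for $j=1,\dots,n$. Since the $\alpha_i$ lie in distinct, hence disjoint, orbits and are roots of unity, the set $\bigcup_i\Oc(\alpha_i)$ consists of $N:=\sum_i d(\alpha_i)$ distinct nonzero numbers $\gamma_1,\dots,\gamma_N$; writing $x_m=\gamma_m^{-1}$ and giving $\gamma_m$ the weight $\nu_m=\mu_{i(m)}$ of the orbit containing it, the relations read $\sum_{m=1}^N\nu_m x_m^{\,j}=0$. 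The hypothesis $n\ge N$ lets me keep the equations $j=1,\dots,N$, whose matrix $(x_m^{\,j})_{1\le j,m\le N}$ is the product of a Vandermonde matrix in the distinct $x_m$ and the diagonal matrix $\mathrm{diag}(x_1,\dots,x_N)$ with nonzero entries, hence invertible. Therefore all $\nu_m=0$, so all $\mu_i=0$. The final assertion of (i) follows from $d(\alpha_i)\mid r_i$, which gives $\sum_i d(\alpha_i)\le\sum_i r_i\le n$.

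For (ii) I adjoin $Q_1(0)=(0,1,0,\dots,0)$ (Example \ref{exm}), whose single nonzero entry sits in coordinate $k=1$, i.e. in the slot $j=n-1$. A relation $\lambda_0Q_1(0)+\sum_i\mu_i V(\alpha_i)=0$ thus gives $\sum_m\nu_m x_m^{\,j}=0$ for every $j\in\{1,\dots,n\}\setminus\{n-1\}$, together with one equation in the slot $j=n-1$ that expresses $\lambda_0$ through the $\nu_m$. As $n\ge N+2$ forces $n-1>N$, the exponents $j=1,\dots,N$ avoid $n-1$, so the Vandermonde argument again yields $\nu_m=0$, hence $\mu_i=0$, and then the $j=n-1$ equation gives $\lambda_0=0$. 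The final clause follows from $\sum_i d(\alpha_i)\le\sum_i r_i\le n-2$.

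The computations are routine; the delicate point is the bookkeeping of exponents. Working directly with the orbit power sums $S_j(\alpha_i)=\sum_{\beta\in\Oc(\alpha_i)}\beta^{-j}$ is tempting, but the identity $S_n=S_1$ holding on each orbit (since $\beta\mapsto\beta^n$ permutes $\Oc(\alpha)$, as already used to derive $q_{r,0}=n\,q_{r,n-1}$) makes the $j=1$ and $j=n$ relations coincide and muddies the count. Passing instead to the individual orbit elements $x_m$ removes the issue: the Vandermonde matrix is invertible the moment the $x_m$ are distinct and nonzero, irrespective of any orbit relations, so the only thing to verify is that enough distinct exponents remain, avoiding $n-1$ in part (ii). That is exactly what $n\ge N$ and $n\ge N+2$ secure.
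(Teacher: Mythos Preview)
Your proof is correct and follows essentially the same route as the paper's: collect the orbit elements into a single set $S$ of size $N=\sum_i d(\alpha_i)$, turn a linear relation among the $Q_{r_i}(\alpha_i)$ into the vanishing of the power sums $\sum_{\gamma\in S} c(\gamma)\gamma^{-j}$ for enough consecutive exponents $j$, and conclude via a Vandermonde-type nondegeneracy. The paper packages the last step as a separate lemma proved via the partial-fraction identity for $\sum_\gamma c(\gamma)/(\gamma-t)$, while you invoke the Vandermonde determinant directly; the paper itself remarks that this alternative works, so the difference is cosmetic.
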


\begin{proof}[Proof of Theorem \ref{bign}]

In the course of the proof we will use the following elementary statement that will be proven at the end of this section.

\begin{lem}
\label{newton}
Let $d$ be a positive integer,  $S$ a set of $d$ nonzero complex numbers.
Let $c:S \to \C$ be a function such that for all positive integers $u=1, \dots ,d$
 $$\sum_{\beta \in S}\frac{c(\beta)}{\beta^u}=0.$$
Then $c(\beta)=0$ for all $\beta\in S$.

\end{lem}

Let us continue to prove Theorem \ref{bign}. Replacing each $r_i$ by
$d(\alpha_i)$ we may and will assume that $r_i=d(\alpha_i)$, i.e.,
the orbit $\Oc(\alpha_i)$ of $\alpha_i$ consists of $r_i$ distinct
elements (for all $i$). We also assume that $n \ge
\sum_{i=1}^{\ell}r_i$.

Let $\{c_1,  \dots c_{\ell}\}$ be a sequence of $\ell$ complex numbers  such that
 $$\sum_{i=1}^{\ell} c_i Q_{r_i}(\alpha_i)=0.$$
 Let $S\subset \C$ be the (disjoint) union of all $\Oc(\alpha_i)$, which consists of $\left(\sum_{i=1}^{\ell} r_i\right)$ elements. Let us define a complex valued function
 $c$ on $S$ that assigns to $\alpha \in \Oc(\alpha_i)$ the complex number
 $$c(\alpha):=n^{r_i-1}c_i.$$
 Then we obtain for all $k=0,1, \dots , n-1$
 $$0=\sum_{i=1}^{\ell} c_i q_{r_i,k}(\alpha_i)=-(n-k)\sum_{\alpha\in S}c(\alpha) (1/\alpha)^{n-k}.$$
 This implies that
 $$\sum_{\alpha\in S}c(\alpha) (1/\alpha)^{u}=0$$
 for all positive integers  $u=1, \dots, n$.

 It follows from Lemma \ref{newton} applied to $d=\sum_{i=1}^{\ell}r_i$  that
 all $c(\alpha)=0$. Since all $n^{r_i-1}\ne 0$, we conclude that all $c_i=0$. This proves (i).

 Now assume that $n \ge 2+\sum_{i=1}^{\ell}r_i$. We are going to prove (ii).
 Let $\{c_0, c_1,  \dots c_{\ell}\}$ be a sequence of $(\ell+1)$ complex numbers  such that
 $$c_0 Q_1(0)+\sum_{i=1}^{\ell} c_i Q_{r_i}(\alpha_i)=0.$$
 We have
 $$ -c_0 Q_1(0)=\sum_{i=1}^{\ell} c_i Q_{r_i}(\alpha_i).$$
 Recall that all the coordinates of $Q_1(0)$ except the second one do vanish. This implies that
 $$0=\sum_{i=1}^{\ell} c_i q_{r_i,k}(\alpha_i)=-(n-k)\sum_{\alpha\in S}c(\alpha) (1/\alpha)^{n-k}$$ for all $k=0, \dots n-1$ except $k=1$. It
 follows
 that
 $$\sum_{\alpha\in S} c(\alpha) (1/\alpha)^{u}=0$$
 for all positive integers integers $u=1, \dots, n-2$. Since $n-2\ge d$, the same arguments with Lemma \ref{newton} as above prove that $c_i=0$ for all positive integers $i$ and therefore
 $-c_0 Q_1(0)=0$, i.e., $c_0=0$.
\end{proof}

\begin{proof}[Proof of Lemma \ref{newton}]
This Lemma is a variant of well-known classical results (e.g., see \cite{Yomdin}).
Let us consider the rational function
$$X(t)=\sum_{\beta \in S} \frac{c(\beta)}{\beta - t}=
\sum_{\beta \in S} \frac{c(\beta)/\beta}{1-\frac{t}{\beta}}.$$
Clearly,
$$X(t)=\frac{Q(t)}{\prod_{\beta\in S}(\beta- t)}$$
where $Q(t)$ is a polynomial, whose degree does not exceed $d-1$. (Recall that $d=\#(S)$.)
 For each positive integer $u$, the number $\sum_{\beta\in S} c(\beta)/ \beta^u$ is
 the $(u-1)$th coefficient of the Taylor power series of $X(t)$ at the origin
  (see \cite[Ch. 1, Sect. 2]{Mc}). It follows that $X(t)$ has a zero of order $\ge d$ at the origin.
This implies that $Q(t)$ is divisible by $t^{d}$ and therefore $Q(t)=0$, i.e. $X(t)=0$.
Since $-c(\beta)$ is the residue of $X(t)$ at $t=\beta$ for all $\beta \in S$, we conclude that $c(\beta)=0$.
\end{proof}

\begin{rem}
 One may give even more elementary proof of Lemma \ref{newton},
using the nondegeneracy of the Vandermonde matrix of size $d \times d$ for {\sl distinct} numbers
$\{1/\beta \mid \beta \in S\}$.
\end{rem}

\section{Openness in Zariski topology}
\label{global}


The aim of this Section is to prove Theorem \ref{open}.
We will need the following well known  easy statement.

\begin{lem}
\label{div}
 Let $n \ge 2$ be an integer and $g(x)\in \C[x]$ is a monic degree $n$ polynomial.
Suppose that $g(x)-x$ has a multiple root say, $\alpha$. Then for all positive integers $r$ the complex number
$\alpha$ is a multiple root of $g^{\circ r}(x)-x$.
\end{lem}

\begin{proof}
 We have
$$g(\alpha)=\alpha, \ g^{\prime}(\alpha)=1.$$
It follows easily that
$$g^{\circ r}(\alpha)=\alpha, \ [g^{\circ r}]^{\prime}(\alpha)=1.$$
This means that
$$[g^{\circ r}(x)-x](\alpha)=0, \ [g^{\circ r}(x)-x]^{\prime}(\alpha)=0.$$
In other words, $\alpha$ is a multiple root of $g^{\circ r}(x)-x$.
\end{proof}

\begin{cor}
\label{divcor} Let $m$ be a positive integer that divides $r$.
Suppose that $g^{\circ m}(x)-x$ has a multiple root say, $\alpha$.
Then $\alpha$ is a multiple root of $g^{\circ r}(x)-x$.
\end{cor}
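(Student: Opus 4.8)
The plan is to reduce this to Lemma \ref{div} by treating the iterate $g^{\circ m}$ as a single polynomial and then reiterating it. Since $m$ divides $r$, I would write $r = m s$ for some positive integer $s$, and set $h(x) := g^{\circ m}(x)$. This $h$ is a monic polynomial of degree $n^m$, and because $n \ge 2$ and $m \ge 1$ we have $n^m \ge 2$, so $h$ satisfies the degree hypothesis of Lemma \ref{div}.

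By assumption $h(x) - x = g^{\circ m}(x) - x$ has the multiple root $\alpha$. Applying Lemma \ref{div} to the polynomial $h$ (in place of $g$) and the positive integer $s$ (in place of $r$), I conclude that $\alpha$ is a multiple root of $h^{\circ s}(x) - x$. It then remains to invoke the elementary identity $h^{\circ s} = (g^{\circ m})^{\circ s} = g^{\circ m s} = g^{\circ r}$, which identifies $h^{\circ s}(x) - x$ with $g^{\circ r}(x) - x$; hence $\alpha$ is a multiple root of $g^{\circ r}(x) - x$, as desired.

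There is no genuine obstacle here: the corollary is an immediate consequence of Lemma \ref{div} once one observes that $g^{\circ r}$ is an iterate of $g^{\circ m}$. The only points requiring (trivial) verification are that $g^{\circ m}$ is again monic of degree at least $2$, so that Lemma \ref{div} genuinely applies, and that composition of iterates multiplies the exponents, giving $m s = r$.
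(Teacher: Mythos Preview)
Your proof is correct and is exactly the intended derivation: the paper states this as a corollary with no separate proof, since it follows from Lemma~\ref{div} by replacing $g$ with $g^{\circ m}$ and using $g^{\circ r}=(g^{\circ m})^{\circ s}$ with $r=ms$.
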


\begin{sect}
\label{numberoforbits}
 Let $g(x)\in Z_{n,r}$. If $m$ is a positive
divisor of $r$ then Corollary \ref{divcor} implies that $g(x) \in
Z_{n,m}$. The number of periodic points of exact period $m$ for
$z\mapsto g(z)$ is
$$\nu_n(m)=\sum_{m^{\prime}\mid m} \mu\left(\frac{m}{m^{\prime}}\right)n^{m^{\prime}}$$
where $\mu$ is the M\"obius function \cite[pp. 74--75]{Milnor}. In particular, the number of orbits of length $m$ equals
$$d(n, m)=\frac{\nu_n(m)}{m}$$
and therefore $\nu_n(m)$ is divisible by $m$. The explicit formula
for $\nu_n(m)$ implies that $\nu_n(m)$ is also divisible by $n$
(ibid). It follows that $\nu_n(m)$ is divisible by $nm/(n,m)$ where
$(n,m)$ is the greatest common divisor  of $n$ and $m$. On the other
hand, the number $\nu_n(m)$ is always positive. Indeed, $\nu_n(1)=n$
and for $1<m\le 5$ we have $\nu_n(m)=n^m-n^{m/p}>0$ where $p$ is the
only prime divisor of $m$. Now assume that $m>5$, i.e, $m \ge 6$.
Notice that the points of exact period $m$ are exactly the roots of
$g^{\circ m}(x)-x$ that are not roots of $g^{\circ (m/p)}(x)-x$ for
any prime divisor $p$ of $m$. Since the number of prime divisors of
$m$ does not exceed $\log_2(m)$,
$$\nu_n(m) \ge n^{m} - n^{m/2}\log_2(m).$$
It is easy to check that (under our assumptions on $n$ and $m$) we
have $n^{m/2}\ge 2^{m/2}>\log_2(m)$ and therefore $\nu_n(m)$ is
positive.

Since  $\nu_n(m)$ is divisible by $\frac{nm}{(n,m)}$,
$$\nu_n(m) \ge \frac{nm}{(n,m)}, \ d(n,m) \ge \frac{n}{(n,m)} \ge
\frac{n}{m}.$$
\end{sect}

\begin{sect}
For each positive divisor $m$ of $r$ we pick a $d(n, m)$-element set $S_m$ and consider the corresponding
 $d(n, m)$-dimensional coordinate space $\C^{S_m}$ of all $\C$-valued functions on $S_m$.

 Let us consider the Zariski-closed subset
$$\hat{Z}_{n,r} \subset Z_{n,r} \times \prod_{m\mid r}\C^{S_m}$$
that is cut out by the following equations imposed on
$$\{g; \phi_m:S_m \to \C, \ m\mid r \}\in Z_{n,r} \times \prod_{m\mid r}\C^{S_m}.$$

For each $s \in S_m$ the complex number $\phi_m(s)$ is a periodic
point, whose period divides  $m$, with respect to  $z \mapsto g(z)$,
i.e. $g^{\circ m}(\phi_m(s))=\phi_m(s)$. In addition, we require
that
$$g^{\circ r}(x)-x=
\prod_{m\mid r} \left(\prod_{s \in S_m} (x-\phi_m(s))
\prod_{i=1}^{m-1} \left(x-g^{\circ i}(\phi_m(s))\right)\right).$$ In
other words, the coefficients of both polynomials in $x$ do
coincide. Notice that
$$g^{\circ r}(\phi_m(s))-\phi_m(s)=0$$
on $\hat{Z}_{n,r}$. In particular, all the coordinate functions
$\phi_m(s)$ on $\hat{Z}_{n,r}$ are integral over the polynomial ring
$\C[\A^n]=\C[a_0, \dots , a_{n-1}]$, which is generated by the
coefficients of $g(x)=x^n + \sum_{i=0}^{n-1} a_i x^i$.

Recall that $g^{\circ r}(x)-x$ has no multiple roots. It follows
that each map $\phi_m:S_m \to \C$ is injective, its image consists
of elements of exact period $m$ while distinct elements of $S_m$ go
under $\phi_m$ to distinct orbits of length $m$; in addition, every
orbit of length $m$ contains exactly one element of $\phi_m(S_m)$.
On the other hand, for any choice of an element $\zeta$ in every
orbit of length $m$ (for each divisor $m$ of $r$) there is (exactly
one) point of $\hat{Z}_{n,r}$ that lies ``above" $g(x)$ and such
that the corresponding $\phi_m(S_m)$ consists of these $\zeta$.

By construction, the projection map of affine varieties
$\hat{Z}_{n,r} \to Z_{n,r}$ is surjective. In addition, this map
 is finite, because the ring of regular functions
 $\C[\hat{Z}_{n,r}]$ on $\hat{Z}_{n,r}$ is generated over
 $\C[Z_{n,r}]\supset \C[\A^n]=\C[a_0, \dots , a_{n-1}]$ by the coordinate functions
$\phi_m(s)$ that are integral over $\C[\A^n]$.

 Now one may ``lift'' $\Mult^r=\Mult_{r,\beta}$ to globally defined
functions on $\hat{Z}_{n,r}$. Namely, for each $s\in S_r$ the
function
$$\overline{\Mult}_{r,s}:\hat{Z}_{n,r} \to \C, \{g; \phi_m:S_m \to \C, \ m\mid r \}\mapsto \Mult_{r,\phi_r(s)}(g)$$
is a globally defined regular function.  If $\phi_m(s)$ and $\beta$
lie in the same orbit of length $m$ then this function coincides
with the composition of projection map $\hat{Z}_{n,r} \to Z_{n,r}$
and $\Mult_{r,\beta}$.
 It is also clear that the vector function
$$\overline{\grad}(\Mult_{r,s}) :\hat{Z}_{n,r} \to \C^n, \ \{g; \phi_m:S_m \to \C^n, \ m\mid r \}\mapsto
\grad(\Mult_{r,\phi_r(s)})_{\mid g(x)}$$ is a regular map that
coincides with the composition of projection map $\hat{Z}_{n,r} \to
Z_{n,r}$  and $\grad(\Mult_{r,\beta}): Z_{n,r}\to \C^n$  with
$\beta=\phi_r(s)$.

Let $\ell$ be a positive integer that does not exceed $d(n,r)$. If
$D$ is an $\ell$-element subset of $S_r$ let us consider the the
subset $X_D$ of points $v \in \hat{Z}_{n,r}$ such that the
collection of $\ell$ vectors $\{\overline{\grad}(\Mult_{r,s})(v)\mid
s\in D\}$ is linearly dependent in $\C^n$. Clearly, $X_D$ is a
Zariski-closed subset in $\hat{Z}_{n,r}$. It follows that the union
$X$ of all $X_D$ (where $D$ runs through all $\ell$-element subsets
of $S_r$) is also closed in  $\hat{Z}_{n,r}$. The finiteness of the
projection map implies that the image $\bar{X}$ of $X$ in $Z_{n,r}$
is also Zariski-closed (\cite[Ch. 1, Sect. 5.3]{Sh}). On the other
hand, one may easily check that $\bar{X}$ is the complement of
$Z^0(n,\ell; r, r, \dots ,r)$ in $Z(n,\ell; r,r \dots, r)=Z_{n,r}$.
It follows that $Z^0(n,\ell; r,r \dots, r)$ is Zariski-open in
$Z_{n,r}$. This proves Theorem \ref{open} in the case when $r_1=r_2=
\dots = r_{\ell}$.
\end{sect}

\begin{sect}
Now let us consider the general case. Let $d$ be the number of distinct elements in
 the sequence $\{r_1, \dots ,r_{\ell}\}$ and
$R$ the corresponding $d$-element set of positive integers. For each
$r\in R$ we denote by $l(r)$ the number of $i$ with $r_i=r$. If
there is $r$ with $l(r)> d(n,r)$ then $Z(n,\ell; r_1, r_2, \dots
,r_{\ell})$ is empty. So further we assume that $l(r)\le d(n,r)$ for
all $r\in R$.

We write $\hat{Z}_{n,r}^{\prime}$ for the preimage of $Z(n,\ell;
r_1, r_2, \dots ,r_{\ell})$ in $\hat{Z}_{n,r}$. The natural
 regular map
$$\hat{Z}_{n,r}^{\prime}\to Z(n,\ell; r_1, r_2, \dots ,r_{\ell})$$
  is finite, because the base change of a finite map is finite \cite[Sect. 8, Prop. 8.22]{Milne}. Let us consider the fiber product
$\hat{Z}_{n,R}$ of all $\hat{Z}_{n,r}^{\prime}$ (where $r$ runs
through $R$) over $Z(n,\ell; r_1, r_2, \dots ,r_{\ell})$. If we
write $\pi$ for the natural regular map $$\hat{Z}_{n,R}\to Z(n,\ell;
r_1, r_2, \dots ,r_{\ell})$$ then $\pi$ is finite, because the
composition  of finite maps is also finite \cite[Sect. 8, Prop. 8.4
and 8.22]{Milne}. We denote by $\pi_r$
 the natural finite regular map
$$\pi_r:\hat{Z}_{n,R} \to \hat{Z}_{n,r}^{\prime}.$$
Now for each $r \in R$ pick a $l(r)$-element subset $D_r$ in $S_r$
and consider the subset $X_{\{D_r\mid r\in R\}}$ of points $v \in
\hat{Z}_{n,R}$ such that the collection of $\left(\sum_{r\in
R}l(r)\right)$ vectors
$$\{\overline{\grad}(\Mult_{r,s_r})(v_r)\mid s_r\in D_r, \ r \in R\}$$
 is linearly dependent in $\C^n$.
Here $v_r=\pi_r(v)\in \hat{Z}_{n,r}^{\prime}$. Clearly,
$X_{\{D_r\mid r\in R\}}$ is a closed algebraic subvariety in
$\hat{Z}_{n,R}$. Now let us consider the union $Y$ of all such
$X_{\{D_r\mid r\in R\}}$ for all choices of $\{D_r\mid r\in R\}$. It
is also clear that $Y$ is also a closed algebraic subvariety in
$\hat{Z}_{n,R}$. Since  $\pi$ is finite,
  the image $\pi(Y)$ is a closed algebraic subvariety in
$Z(n,\ell; r_1, r_2, \dots ,r_{\ell})$. On the other hand, one may easily check
that $Z^{0}(n,\ell; r_1, r_2, \dots ,r_{\ell})$ is the complement of $\pi(Y)$ in $Z(n,\ell; r_1, r_2, \dots ,r_{\ell})$.
It follows that $Z^{0}(n,\ell; r_1, r_2, \dots ,r_{\ell})$ is Zariski-open in $Z(n,\ell; r_1, r_2, \dots ,r_{\ell})$.
This ends the proof of Theorem \ref{open}.
\end{sect}

{\bf Acknowledgements.} I am grateful to Yulij S. Ilyashenko,
Vladimir L. Popov and Victor S. Kulikov  for  stimulating
discussions. My special thanks go to Tatiana Bandman, whose comments
helped to improve the exposition.

 Part of this work was done during my stay at
Weizmann Institute of Science Department of Mathematics in May--June
of 2012 and at Centre Interfacultaire Bernoulli (\'Ecole
Polytechnique F\'ed\'erale de Lausanne) in July--August of 2012:
 I am grateful to both of them  for the hospitality.

 \end{document}